\newtheorem{theorem}{Theorem}
\newtheorem{lemma}[theorem]{Lemma}
\newtheorem*{thm2}{Theorem 2}
\def\lm {\preceq}
\def\F {{\cal F}}
\begin{document}
\title{Graphs with bounded tree-width and large odd-girth are almost bipartite}
\author{Alexandr V. Kostochka\thanks{%
Department of Mathematics, University of Illinois, Urbana, IL 61801
and Institute of Mathematics, Novosibirsk 630090, Russia. E-mail: 
\texttt{kostochk@math.uiuc.edu}. This author's work was partially
supported by NSF grant DMS-0650784 and by grant 09-01-00244-a of the
Russian Foundation for Basic Research.}
\and
Daniel Kr{\'a}l'\thanks{%
Institute for Theoretical Computer Science, Faculty of Mathematics and
Physics, Charles University, Malostransk{\'e} n{\'a}m{\v e}st{\'\i} 25,
118 00 Prague, Czech Republic. E-mail: \texttt{kral@kam.mff.cuni.cz}.
The Institute for Theoretical Computer Science (ITI) is supported by
Ministry of Education of the Czech Republic	as project 1M0545.
This research has also been supported by the grant GACR 201/09/0197.}
\and
Jean-S{\'e}bastien Sereni\thanks{%
CNRS (LIAFA, Universit\'e Denis Diderot), Paris, France, and Department
of Applied Mathematics (KAM), Faculty of Mathematics and Physics,
Charles University, Prague, Czech Republic. E-mail:
\texttt{sereni@kam.mff.cuni.cz}.}
\and
Michael Stiebitz\thanks{%
Technische Universit\"at Ilmenau, Institute of Mathematics,
P.O.B. 100 565, D-98684 Ilmenau, Germany. E-mail:
\texttt{Michael.Stiebitz@tu-ilmenau.de}.}}
\date{}	
\maketitle
\begin{abstract}
We prove that
for every $k$ and every $\varepsilon>0$, there exists $g$ such that
every graph with tree-width at most $k$ and odd-girth at least $g$
has circular chromatic number at most $2+\varepsilon$.
\end{abstract}

\section{Introduction}

It has been a challenging problem to prove the existence
of graphs of arbitrary high girth and chromatic
number~\cite{Erd59}. On the other hand, graphs with large
girth that avoid a fixed minor are known to have low chromatic number
(in particular, this applies to graphs embedded on a fixed surface).
More precisely, as Thomassen observed~\cite{Tho88},
a graph that avoids a fixed minor and has large girth is $2$-degenerate,
and hence $3$-colorable. Further, Galluccio, Goddyn and
Hell~\cite{bib-galluccio}
proved the following theorem, which essentially states that
graphs with large girth that avoid a fixed minor are almost bipartite.
\begin{theorem}[Galluccio, Goddyn and Hell, 2001]
\label{thm-ggh}
For every graph $H$ and every $\varepsilon>0$, there exists an integer
$g$ such that the circular chromatic number of
every $H$-minor free graph of girth at least $g$ is at most
$2+\varepsilon$.
\end{theorem}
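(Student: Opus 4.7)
I would reduce Theorem~\ref{thm-ggh} to a graph homomorphism problem. Fix $H$ and $\varepsilon>0$, and let $m\ge\lceil 1/\varepsilon\rceil$. Any homomorphism $\phi\colon G\to C_{2m+1}$ from an $H$-minor-free graph $G$ to the odd cycle of length $2m+1$ witnesses $\chi_c(G)\le (2m+1)/m\le 2+\varepsilon$. It therefore suffices to find a threshold $g=g(H,m)$ such that every $H$-minor-free graph of girth at least $g$ admits such a homomorphism.

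The plan combines two complementary structural consequences of the hypotheses. The sparsity side comes from the Mader--Kostochka--Thomason bound: $H$-minor-free graphs are $d$-degenerate for some $d=d(H)$, so they admit a vertex ordering in which each vertex has at most $d$ earlier neighbors. The tree-likeness side comes from the girth assumption: the ball of radius less than $g/2$ around any vertex contains no cycle, and is therefore a tree. I would attempt to build $\phi$ iteratively along a degeneracy ordering. At each step, the newly added vertex $v$ has at most $d$ already-colored neighbors, and I would first try to assign it a color in $\mathbb{Z}_{2m+1}$ compatible with all of them; when no such color exists, I would reach back into the tree-like neighborhood of~$v$ and locally re-route the coloring by propagating small shifts along a subtree, using the circular flexibility of $C_{2m+1}$ to absorb the discrepancy.

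The main obstacle is controlling the interaction between local re-routings, and this is what dictates the growth of~$g$. A single re-routing propagates along a path in a tree-like neighborhood, consuming a small amount of circular slack; trouble arises when two re-routings meet at a shared vertex and demand inconsistent shifts. I would address this by organizing the repairs along a coarse structural decomposition of $G$, either in the spirit of the Robertson--Seymour decomposition for $H$-minor-free graphs, or more elementarily by iteratively peeling off low-degree vertices and maintaining a tree-decomposition of what remains. The large girth would then guarantee that the total accumulated shift on any vertex stays strictly below $2m+1$, so the re-routings compose into a globally valid homomorphism. Quantitatively, I expect $g$ to have to grow at least polynomially (plausibly exponentially) in $m\cdot d(H)$, with an extra factor coming from the depth of the structural decomposition, which is where the proof would be most delicate.
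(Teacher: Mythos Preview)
First, a framing point: the paper does not prove Theorem~\ref{thm-ggh}. It is quoted from \cite{bib-galluccio} as prior work motivating Theorem~\ref{thm-main}, so there is no in-paper proof to compare your proposal against.

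On its own merits, what you have written is a plan rather than a proof, and you say as much. The central gap is precisely the one you flag: once a greedy step fails and you ``reach back'' to re-route colors along a tree-like neighborhood, you must guarantee that independent re-routings never collide inconsistently. Saying you would ``organize the repairs along a coarse structural decomposition'' does not discharge this; the Robertson--Seymour structure theorem for $H$-minor-free graphs produces clique-sums of almost-embeddable pieces with apices and vortices, and nothing in that description bounds the interaction of your local shifts. The girth hypothesis makes each \emph{individual} ball of radius less than $g/2$ a tree, but two such balls can overlap heavily, and a repair inside one can disturb arbitrarily many already-colored vertices that matter for later steps. Without a concrete invariant that survives every repair, the argument does not close.

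A smaller diagnostic: you invoke Mader--Kostochka--Thomason to get $d(H)$-degeneracy, but as the paper itself notes (citing Thomassen), $H$-minor-free graphs of sufficiently large girth are already $2$-degenerate. That you are not using the girth hypothesis in the degeneracy step is a sign the two hypotheses have not yet been made to interact in your argument---and making them interact is exactly what the actual Galluccio--Goddyn--Hell proof does, through an analysis of the cycle structure rather than a greedy-with-repairs scheme.
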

A natural way to weaken the girth-condition is to
require the graphs to have high odd-girth
(the \emph{odd-girth} is the length of a shortest odd cycle).
However, Young~\cite{You96} constructed $4$-chromatic projective graphs with
arbitrary high odd-girth. Thus, the high odd-girth requirement is not
sufficient to ensure $3$-colorability, even for graphs embedded on a
fixed surface.
Klostermeyer and Zhang~\cite{KlZh00}, though, proved that the circular chromatic
number of every planar graph of sufficiently high odd-girth is
arbitrarily close to $2$. In particular, the same is true for
$K_4$-minor free graphs, i.e. graphs with tree-width at most $2$.
We prove that the conclusion is still true for any class of graphs of
bounded tree-width, which answers a question of Pan and
Zhu~\cite[Question 6.5]{bib-pan} also appearing as Question 8.12
in the survey by Zhu~\cite{bib-zhu01}.

\begin{theorem}\label{thm-main}
For every $k$ and every $\varepsilon>0$, there exists $g$ such that
every graph with tree-width at most $k$ and odd-girth at least $g$
has circular chromatic number at most $2+\varepsilon$.
\end{theorem}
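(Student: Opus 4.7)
Fix $\ell:=\lceil 1/\varepsilon\rceil$. Since $\chi_c(G)\le 2+1/\ell$ if and only if $G$ admits a homomorphism to the odd cycle $C_{2\ell+1}$, it suffices to show: for every $k$ and every $\ell$, there is a $g$ such that every graph of tree-width at most $k$ and odd-girth at least $g$ maps homomorphically to $C_{2\ell+1}$. My plan is to construct such a homomorphism by dynamic programming along a tree decomposition of $G$, using the odd-girth hypothesis to rule out obstructions to the DP.

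Fix a tree decomposition $(T,(B_t)_{t\in V(T)})$ of $G$ with every $|B_t|\le k+1$, rooted at some node $r$. For each $t\in V(T)$, let $G_t$ be the subgraph of $G$ induced by the vertices appearing in bags of the subtree rooted at $t$, and let $\Lambda_t$ denote the set of labelings $\lambda\colon B_t\to\mathbb{Z}_{2\ell+1}$ that extend to a homomorphism $G_t\to C_{2\ell+1}$. The sets $\Lambda_t$ can be computed recursively from leaves to root: extending labels at introduce nodes, projecting at forget nodes, and intersecting at join nodes. The theorem reduces to showing $\Lambda_r\neq\emptyset$, and hence to showing that each intersection at a join node is nonempty.

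The crucial step is to prove that whenever $t$ is a join node with children $t_1$ and $t_2$ and both $\Lambda_{t_1}$ and $\Lambda_{t_2}$ are nonempty, so is $\Lambda_{t_1}\cap\Lambda_{t_2}$, provided $g$ is large enough. If the intersection were empty, then for some pair $u,v\in B_t$ the ``parity data'' modulo $2\ell+1$ of walks between $u$ and $v$ inside $G_{t_1}$ and inside $G_{t_2}$ would be incompatible; concatenating an offending walk in $G_{t_1}$ with one in $G_{t_2}$ produces a closed walk in $G$ of odd length, which contains an odd cycle of no greater length. Provided the length of the shortest such offending pair of walks can be bounded in terms of $k$ and $\ell$ only, choosing $g$ larger than that bound contradicts the odd-girth hypothesis.

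The main obstacle is making the above quantitative. Mere nonemptiness of the individual $\Lambda_{t_i}$ is not enough: one must maintain, at each bag, enough information about which labelings of $B_t$ are realized by homomorphisms of $G_{t_i}$. To achieve this I would prove by induction along the tree decomposition that each $\Lambda_t$ is ``rich enough'' in label-differences across the bag, strongly enough to guarantee that the intersection at the parent join node remains nonempty; one convenient parameter to track is the minimum length, over pairs in $B_t$ and over pairs of opposite-parity walks witnessing an incompatibility, of the shortest obstructing pair. Quantifying this richness so that the induction can be pushed all the way up to the root --- in particular tracking how the minimum obstruction length shrinks on the way down the tree --- is the heart of the proof, and explains why the threshold $g$ must depend on both $k$ and $\ell$.
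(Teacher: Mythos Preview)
Your proposal is an outline, not a proof: you explicitly say that ``quantifying this richness so that the induction can be pushed all the way up to the root \ldots\ is the heart of the proof,'' and that heart is absent. Beyond this acknowledged gap, the sketch contains a substantive error in the one step you do try to argue.

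At a join node $t$ the subgraphs $G_{t_1}$ and $G_{t_2}$ can be arbitrarily large; in particular neither need be bipartite, so talking about ``parity data modulo $2\ell+1$'' of walks in them is not meaningful. More importantly, emptiness of $\Lambda_{t_1}\cap\Lambda_{t_2}$ is \emph{not} a parity phenomenon. Feasible labelings for $C_{2\ell+1}$ are governed by metric constraints (a $uv$-walk of length $d$ restricts $\lambda(v)-\lambda(u)$ to the set of endpoints of length-$d$ walks from $0$ in $C_{2\ell+1}$), and two such constraint systems on $B_t$ can be individually satisfiable yet jointly unsatisfiable without any pair of walks of opposite parity being involved. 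So the sentence ``concatenating an offending walk in $G_{t_1}$ with one in $G_{t_2}$ produces a closed walk in $G$ of odd length'' is unjustified. Even in cases where some odd closed walk can be extracted, you give no bound on its length in terms of $k$ and $\ell$; the obstructing walks live in $G_{t_i}$, whose size is unbounded. Your suggested fix --- track a ``minimum obstruction length'' through the DP --- has no inductive step: you would need to show this quantity cannot degrade without bound as one ascends the tree, and nothing in the outline addresses that.

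The paper avoids this entirely by a replacement/minimal-counterexample argument rather than bottom-up DP. It chooses a separator so that one side $G_1$ has \emph{bounded} size (between $N+1$ and $2N$), hence is bipartite by the odd-girth hypothesis; then, using an Albertson--West precoloring-extension lemma for bipartite graphs, it shows there are only finitely many ``effective'' bipartite pieces up to the pair (set of extendable precolorings, distance type), and replaces $G_1$ by a fixed smaller gadget $G^i$ with $\F(G^i)\subseteq\F(G_1)$ and larger-or-equal pairwise root distances. The first inclusion preserves non-$(p,q)$-colorability; the distance comparison preserves the odd-girth bound. This yields a smaller counterexample, a contradiction. The point is that the argument never has to propagate any quantitative invariant through an unbounded-depth tree; the finiteness is established once, via the extension lemma, and then used in a single replacement step. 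If you want to salvage a DP approach, you would need an analogue of that finiteness (something like: only boundedly many ``types'' of $\Lambda_t$ can occur), which is essentially the paper's Lemma~\ref{lm-mainM}/\ref{lm-mainG} and requires the Albertson--West input you have not invoked.
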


Motivated by tree-width duality, Ne{\v s}et{\v r}il
and Zhu~\cite{bib-nesetril} proved the following theorem.
\begin{theorem}[Ne\v set\v ril and Zhu, 1996]\label{thm-twd}
For every $k$ and every $\varepsilon>0$, there exists $g$ such that
every graph $G$ with tree-width at most $k$ and homomorphic to a graph $H$
with girth at least $g$ has circular chromatic number at most
$2+\varepsilon$.
\end{theorem}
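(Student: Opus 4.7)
The plan is to reduce the assertion to finding a homomorphism from $G$ into an odd cycle. Choose $t=\lceil 1/\varepsilon\rceil$, so that $\chi_c(C_{2t+1})=(2t+1)/t\le 2+\varepsilon$; it then suffices to prove that, whenever $g=g(k,t)$ is sufficiently large, every graph $G$ of tree-width at most $k$ that is homomorphic to some $H$ of girth at least $g$ admits a homomorphism to $C_{2t+1}$. Fix the given homomorphism $\phi\colon G\to H$ and a tree decomposition $(T,\{B_\tau\}_{\tau\in V(T)})$ of $G$ of width at most $k$.

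The local building block is that $H$ is tree-like on small scales: running BFS from any basepoint $h_0\in V(H)$, the girth bound forbids same-level edges at BFS depth less than $(g-1)/2$, since such an edge would complete an odd cycle of length $\le 2\operatorname{depth}+1$. Consequently, for $r\le\lfloor(g-3)/2\rfloor$ the assignment $x\mapsto d_H(h_0,x)\pmod{2t+1}$ is a homomorphism, from the subgraph of $H$ induced on the ball of radius $r$ about $h_0$, to $C_{2t+1}$. If the image $\phi(V(G))$ happened to lie inside one such ball, composing with $\phi$ would already give the desired map $G\to C_{2t+1}$.

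In general $\phi(V(G))$ is spread out in $H$, so the target coloring $\psi$ must be stitched together from local BFS-heights, one per bag of the decomposition. I would root $T$ arbitrarily and process its bags top-down: at each $B_\tau$ pick a reference vertex $h_\tau\in\phi(B_\tau)$, use $d_H(h_\tau,\phi(\cdot))\bmod(2t+1)$ to define $\psi$ on the new vertices $B_\tau\setminus B_{\mathrm{parent}(\tau)}$, and shift the result by an element of $\mathbb{Z}_{2t+1}$ chosen so that the values agree with those already fixed on $B_\tau\cap B_{\mathrm{parent}(\tau)}$. The main obstacle is to verify that a compatible shift always exists and that consecutive shifts do not drift along long paths of $T$: the height functions $d_H(h_\tau,\cdot)$ and $d_H(h_{\mathrm{parent}(\tau)},\cdot)$ must differ by a constant on the at most $k+1$ common images, which holds precisely when no pair of those images can be joined in $H$ by two walks of different lengths modulo $2t+1$. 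Any such failure produces a closed walk in $H$ whose length is controlled by $k$ and $t$, and by choosing $g$ strictly larger than this bound (roughly, $g\gg t\cdot f(k)$ for a suitable $f$) one forces the length to be a multiple of $2t+1$ (in fact, even), so the shifts cancel and $\psi$ extends to a genuine homomorphism $G\to C_{2t+1}$. The technical heart of the proof is the quantitative estimate that controls this accumulated drift in terms of the tree-width $k$ and of $t=\lceil 1/\varepsilon\rceil$.
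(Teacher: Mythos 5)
The paper does not prove Theorem~\ref{thm-twd} on its own: it derives it in one line from Theorem~\ref{thm-main}, observing that a homomorphism $G\to H$ maps an odd cycle of length $\ell$ in $G$ to a closed odd walk of length $\ell$ in $H$, which contains an odd cycle of length at most $\ell$; hence the odd-girth of $G$ is at least the girth of $H$, and Theorem~\ref{thm-main} applies. You instead attempt a self-contained proof by stitching local BFS-heights of $H$ along a tree decomposition of $G$. Your reduction to finding a homomorphism $G\to C_{2t+1}$ and the observation that $x\mapsto d_H(h_0,x)\bmod(2t+1)$ is a homomorphism on balls of radius below half the girth are both correct, but the stitching step --- which is where the entire difficulty of the theorem lives --- contains genuine gaps.

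Concretely: (i) the vertices of a bag need not be pairwise adjacent in $G$, so their images under $\phi$ can lie arbitrarily far apart in $H$, in particular outside the ball around $h_\tau$ on which the height function is a homomorphism; nothing in the sketch controls this. (ii) The claim that $d_H(h_\tau,\cdot)$ and $d_H(h_{\mathrm{parent}(\tau)},\cdot)$ ``must differ by a constant'' on the common images is false even when $H$ is a tree: if $x$ is near $h_\tau$, $y$ is near $h_{\mathrm{parent}(\tau)}$, and the two basepoints are at distance $\ell$, the difference of the two height functions is about $-\ell$ at $x$ and $+\ell$ at $y$, so no single shift reconciles the assignments. The criterion you propose (``no pair of images joined by two walks of different lengths modulo $2t+1$'') is also essentially never satisfied, since backtracking produces walks whose lengths differ by $2$. (iii) The accumulated drift is not ``controlled by $k$ and $t$'': root-to-leaf paths of $T$ have length comparable to $|V(G)|$, and an inconsistency only certifies a \emph{long} closed walk in $H$, which is perfectly compatible with girth at least $g$; recall that high-girth graphs of high chromatic number exist, so local tree-likeness of $H$ alone cannot yield the conclusion. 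Turning local consistency into a global homomorphism for bounded tree-width graphs is precisely the content of Lemmas~\ref{lm-mainM} and~\ref{lm-mainG}; without an argument of that strength (or simply the one-line reduction to Theorem~\ref{thm-main}), the proof is incomplete.
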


To see that Theorem~\ref{thm-main} implies Theorem~\ref{thm-twd},
observe that if $G$ has an odd cycle of length $g$, then
$H$ has an odd cycle of length at most $g$.

\section{Notation}

A \emph{$(p,q)$-coloring} of a graph is a coloring of the vertices
with colors from the set $\{0,\ldots,p-1\}$ such that the colors of
any two adjacent vertices $u$ and $v$ satisfy $q\le |c(u)-c(v)|\le p-q$.
The \emph{circular chromatic number $\chi_c(G)$} of a graph $G$
is the infimum (and it can be shown to be the minimum) of the ratios $p/q$ such
that $G$ has a $(p,q)$-coloring. For every finite graph $G$,
it holds that $\chi(G)=\lceil\chi_c(G)\rceil$ and
there is $(p,q)$-coloring of $G$ for every $p$ and $q$
with $p/q\ge\chi_c(G)$. In particular, the circular
chromatic number of $G$ is at most $2+1/k$
if and only if $G$ is homomorphic to a cycle of length $2k+1$.
The reader is referred to the surveys by Zhu~\cite{bib-zhu01,bib-zhu06}
for more information about circular colorings.

A \emph{$p$-precoloring} is a coloring $\varphi$ of a subset $A$ of vertices of
a graph $G$ with colors from $\{0,\ldots,p-1\}$,
and its \emph{extension} is a coloring of
the whole graph $G$ that coincides with $\varphi$ on $A$.
The following lemma can be seen as a corollary of a theorem
of Albertson and West~\cite[Theorem 1]{AlWe06}, and it
is the only tool we use from this area.

\begin{lemma}
\label{lm-extend}
For every $p$ and $q$ with $2<p/q$, there exists $d$ such that any $p$-precoloring
of vertices with mutual distances at least $d$ of a bipartite graph $H$
extends to a $(p,q)$-coloring of $H$.
\end{lemma}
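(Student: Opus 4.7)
The plan is to construct the extension by perturbing a trivial bipartite $(p,q)$-coloring in small local regions around each precolored vertex, exploiting the slack $D := p - 2q \geq 1$. Fix a bipartition $(X, Y)$ of $H$ and take as \emph{base coloring} the valid $(p,q)$-coloring $c_0$ with $c_0(X) = q$ and $c_0(Y) = 0$. I would then look for a \emph{shift} function $f \colon V(H) \to \mathbb{Z}_p$ such that $c := (c_0 + f) \bmod p$ extends the precoloring; the $(p,q)$-coloring condition translates, on each edge $uw$ with $u \in X$ and $w \in Y$, into the single constraint
\[
f(u) - f(w) \in \{0, 1, \ldots, D\} \pmod p.
\]

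For each precolored vertex $v_i$ with desired color $c_i$, I would build a local shift $f_i$ supported in the ball $B(v_i, R)$ of radius $R := p$ and depending only on distance to $v_i$: set $f_i(w) = \phi_i(d_H(w, v_i))$ for a sequence $\phi_i \colon \{0, 1, \ldots, R\} \to \mathbb{Z}_p$ with $\phi_i(0) = c_i - c_0(v_i)$ and $\phi_i(R-1) = \phi_i(R) = 0$. Since $H$ is bipartite, edges inside $B(v_i, R)$ connect only consecutive BFS-layers from $v_i$, and these layers lie in opposite parts of $(X, Y)$; so the edge constraint on $f$ reduces to a \emph{zigzag} condition on $\phi_i$, namely that $\phi_i(j+1) - \phi_i(j)$ lies alternately in $[-D, 0]$ and $[0, D] \pmod p$, with the parity pattern determined by which of $X$ or $Y$ contains $v_i$.

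Given $D \geq 1$, such a sequence exists: a double-layer (two consecutive steps of opposite sign) can shift $\phi_i$ by any value in $[-D, D] \pmod p$, so with $R/2 = p/2$ double-layers I can steer $\phi_i$ from $\phi_i(0) = c_i - c_0(v_i)$ down to $0$ within the first $O(p/D) \leq p$ layers, and then remain at $0$ up to layer $R$. Setting $d := 2R + 1$ makes the balls $B(v_i, R)$ pairwise disjoint by the distance hypothesis, so I can define $f := f_i$ on $B(v_i, R)$ for the unique applicable $i$ and $f := 0$ elsewhere. The constraint on $f$ then holds inside each ball by construction of $\phi_i$, across each ball boundary because $\phi_i$ vanishes on the outermost two layers, and outside all balls trivially. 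At each precolored vertex one has $c(v_i) = c_0(v_i) + \phi_i(0) = c_i$, as required.

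The hard part will be the zigzag-walk argument that produces $\phi_i$: the alternating sign constraint forbids a monotone walk, so each "descent" step of size in $[0, D]$ must be paired with an "ascent" step in $[0, D]$, giving a net change in $[-D, D]$ per double-layer; one must verify that $R = p$ double-layers are enough to realize \emph{any} residue modulo $p$ while also meeting the endpoint conditions $\phi_i(R-1) = \phi_i(R) = 0$ that guarantee a smooth match with the trivial shift $f \equiv 0$ in the exterior.
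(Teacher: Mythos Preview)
Your plan is sound, and in fact it goes further than the paper does: the paper does not prove Lemma~\ref{lm-extend} at all, but simply records it as a corollary of Theorem~1 of Albertson and West~\cite{AlWe06}. What you outline is a direct, self-contained argument that bypasses that citation entirely.

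The reduction to a shift $f$ satisfying $f(u)-f(w)\in\{0,1,\ldots,D\}\pmod p$ on every edge $uw$ with $u\in X$, $w\in Y$ is correct, and the observation that in a bipartite graph BFS-layers from $v_i$ alternate strictly between $X$ and $Y$ is exactly what makes a purely radial shift $\phi_i(d_H(\,\cdot\,,v_i))$ work. The zigzag-walk analysis is also right: a pair of consecutive steps realises any net change in $[-D,D]$, and since any residue modulo $p$ has a representative of absolute value at most $\lfloor p/2\rfloor$, roughly $p/(2D)$ double-layers suffice to reach $0$, after which one stays put (the increment $0$ lies in both admissible intervals). Taking $R$ of order $p$ and $d=2R+1$ then makes the supports disjoint, and your boundary condition $\phi_i(R)=0$ handles edges leaving a ball; the extra condition $\phi_i(R-1)=0$ is harmless but not strictly needed.

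Two small points to tighten when you write it out. First, the parity bookkeeping: with $R:=p$ you have $\lfloor R/2\rfloor$ full double-layers available before layer $R-1$, and you should check the inequality $\lfloor (R-1)/2\rfloor\cdot D\ge\lfloor p/2\rfloor$ in both parities of $p$ (it holds because $D\ge 1$, and $D\ge 2$ whenever $p$ is even, since $D=p-2q$); if you prefer not to track this, simply take $R:=2p$ and the arithmetic becomes trivial. Second, in the last paragraph you wrote ``$R=p$ double-layers'' where you meant ``$R/2$ double-layers''. Neither affects the correctness of the plan.

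Compared with the paper's route, your argument is more elementary and gives an explicit $d$ (namely $d=2p+1$, or $4p+1$ with the safe choice of $R$), at the cost of redoing by hand a special case of the Albertson--West extension theorem. The paper's one-line citation keeps the exposition short, while your approach makes the note self-contained.
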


A \emph{$k$-tree} is a graph obtained from a complete graph of order $k+1$
by adding vertices of degree $k$ whose neighborhood is a clique.
The \emph{tree-width} of a graph $G$ is the smallest $k$ such that
$G$ is a subgraph of a $k$-tree.
Graphs with tree-width at most $k$ are also called \emph{partial $k$-trees}.

A \emph{rooted partial $k$-tree} is a partial $k$-tree $G$
with $k+1$ distinguished vertices $v_1,\ldots,v_{k+1}$ such that
there exists a $k$-tree $G'$ that is a supergraph of $G$ and
the vertices $v_1,\ldots,v_{k+1}$ form a clique in $G'$.
We also say that the partial $k$-tree is \emph{rooted} at $v_1,\ldots,v_{k+1}$.
If $G$ is a partial $k$-tree rooted at $v_1,\ldots,v_{k+1}$ and
$G'$ is a partial $k$-tree rooted at $v'_1,\ldots,v'_{k+1}$, then
the graph $G\oplus G'$ obtained by identifying $v_i$ and $v'_i$ is
again a rooted partial $k$-tree
(identify the cliques in the corresponding $k$-trees).

Fix $p$ and $q$. If $G$ is a rooted partial $k$-tree,
then $\F(G)$ is the set of all $p$-precolorings of the $k+1$ distinguished
vertices of $G$ that can be extended to a $(p,q)$-coloring of $G$.

The next lemma is a standard application of results in the area
of graphs of bounded tree-width~\cite{RoSe86}.

\begin{lemma}
\label{lm-small}
Let $k$ and $N$ be positive integers such that $N\ge k+1$.
If $G$ is a partial $k$-tree with at least $3N$ vertices,
then there exist partial rooted $k$-trees $G_1$ and $G_2$ such that
$G$ is isomorphic to $G_1\oplus G_2$ and $G_1$ has at least $N+1$ and
at most $2N$ vertices.
\end{lemma}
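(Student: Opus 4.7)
The plan is to embed $G$ in a $k$-tree $G^{\ast}$ on the same vertex set and exploit the standard tree decomposition $T$ of $G^{\ast}$, whose nodes correspond to the $(k+1)$-vertex bags $X_t$ of $G^{\ast}$ and whose adjacent bags share exactly $k$ vertices. Root $T$ at an arbitrary node, and for each $s \in V(T)$ write $V(s)$ for the union of bags in the subtree of $T$ rooted at $s$, and $n(s) := |V(s)|$. The structural point is that every bag $X_t$ is a $(k+1)$-clique separator of $G^{\ast}$, so partitioning the subtrees of $T - t$ into two groups and rooting the two resulting induced subgraphs at $X_t$ produces a decomposition $G \cong G_1 \oplus G_2$ in which each $G_i$ inherits the structure of a rooted partial $k$-tree (for the side that does not already contain some vertex of $X_t$, that vertex is adjoined as an extra root).

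To select a partition that yields a $G_1$ of the right size, I would descend $T$ greedily: starting from the root $r$, where $n(r) = |V(G)| \ge 3N > 2N$, move from the current node to any child $c$ with $n(c) > 2N$. Each such step strictly decreases $n$, and $n(\ell) = k+1 \le N$ at every leaf $\ell$, so the walk terminates at a node $s$ satisfying $n(s) > 2N$ but $n(c) \le 2N$ for every child $c$ of $s$. If some child $c$ has $n(c) \ge N+1$, then $n(c) \in [N+1,2N]$ and I take $G_1 := G[V(c)]$ rooted at $X_c$, with $G_2$ the complementary piece rooted at $X_c$, giving $|V(G_1)| = n(c) \in [N+1,2N]$ immediately.

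Otherwise every child $c_i$ of $s$ has $n(c_i) \le N$, so setting $a_i := n(c_i) - k$ yields $a_i \in \{1,\ldots,N-k\}$ while $\sum_i a_i = n(s) - (k+1) > 2N - k - 1$. I then greedily add indices to a set $S$ until $\sum_{i \in S} a_i \ge N-k$; since each $a_i$ is at most $N-k$, the final sum lies in $[N-k, 2N-2k-1] \subseteq [N-k,2N-k-1]$. Setting $G_1 := G\bigl[X_s \cup \bigcup_{i\in S} V(c_i)\bigr]$ rooted at $X_s$ and $G_2$ as the induced subgraph on the complementary vertex set (also rooted at $X_s$) gives $|V(G_1)| = (k+1) + \sum_{i \in S} a_i \in [N+1,2N]$, as required.

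The main obstacle is precisely this bundling case: when no single subtree of $T$ already has size in the target interval, cutting $T$ along one edge cannot succeed. The freedom to root the decomposition at an internal bag $X_s$ rather than at an edge of $T$ is what allows several sibling subtrees to be combined into $G_1$, and the descent step is what guarantees the uniform upper bound $a_i \le N-k$ that makes the greedy subset-sum argument produce a bundle of admissible total size.
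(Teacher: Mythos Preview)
Your argument is correct. The paper does not actually supply a proof of this lemma: it simply labels the statement ``a standard application of results in the area of graphs of bounded tree-width'' and cites Robertson and Seymour. Your descent-and-bundle argument is exactly the kind of standard argument that citation is pointing to, and the details you give (in particular the bookkeeping $a_i=n(c_i)-k$, the bound $\sum_{i\in S}a_i\in[N-k,2N-2k-1]$, and the observation that sibling subtrees meet only inside $X_s$) are all sound. One cosmetic simplification: in your Case~1 you could equally well root at $X_s$ and absorb the single extra vertex of $X_s\setminus X_c$ into $G_2$ rather than adjoining the vertex of $X_c\setminus X_s$ to $G_2$; but your version with the adjoined root vertex is also valid under the paper's definition of a rooted partial $k$-tree.
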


If $G$ is a partial $k$-tree rooted at $v_1,\ldots,v_{k+1}$,
then its \emph{type} is a $(k+1)\times (k+1)$ matrix $M$ such that
$M_{ij}$ is the length of the shortest path between the vertices
$v_i$ and $v_j$. If there is no such path, $M_{ij}$ is equal to $\infty$.
Any matrix $M$ that is a type of a partial rooted $k$-tree satisfies
the triangle inequality (setting $\infty+x=\infty$ for any $x$). 
A symmetric matrix $M$ whose entries are non-negative
integers and $\infty$ (and zeroes only on the main diagonal) that
satisfies the triangle inequality is a \emph{type}.
A type is \emph{bipartite} if $M_{ij}+M_{jk}+M_{ik}\equiv0\;\mod\; 2$
for any three finite entries $M_{ij}$, $M_{jk}$ and $M_{ik}$.
Two bipartite types $M$ and $M'$ are \emph{compatible} if $M_{ij}$ and $M'_{ij}$
have the same parity whenever both of them are finite.
We define a binary relation on bipartite types as follows:
$M\lm M'$ if and only if $M$ and $M'$ are compatible and
$M_{ij}\leq M'_{ij}$ for every $i$ and $j$.
Note that the relation $\lm$ is a partial order.

We finish this section with the following lemma.
Its straightforward proof is included to help us in
familiarizing with the just introduced notation.

\begin{lemma}
\label{lm-type-glue}
Let $G^1$ and $G^2$ be two bipartite rooted partial $k$-trees
with types $M^1$ and $M^2$
such that there exists a bipartite type $M^0$ with $M^0\lm M^1$ and $M^0\lm M^2$.
Then the types $M^1$ and $M^2$ are compatible, $G^1\oplus G^2$ is a bipartite
rooted partial $k$-tree and its type $M$ satisfies $M^0\lm M$.
\end{lemma}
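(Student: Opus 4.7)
The plan is to handle the three conclusions in sequence, reducing every parity question to the bipartite type~$M^0$. Compatibility of $M^1$ and $M^2$ is immediate: since $M^0 \lm M^1$, any finite $M^1_{ij}$ forces $M^0_{ij}\le M^1_{ij}<\infty$ with the same parity, and the same holds with $M^2$ in place of $M^1$; so whenever both $M^1_{ij}$ and $M^2_{ij}$ are finite they both agree with $M^0_{ij}$ modulo~$2$, hence with each other. That $G^1\oplus G^2$ is a rooted partial $k$-tree is exactly the construction defined earlier in this section.

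The nontrivial step is to verify that $G^1\oplus G^2$ is bipartite. I intend first to prove a short ``parity propagation'' claim about $M^0$: whenever $u_0,u_1,\dots,u_m$ is a sequence of distinguished vertices with all entries $M^0_{u_{s-1},u_s}$ finite, we also have $M^0_{u_0,u_m}<\infty$ and $\sum_{s=1}^{m} M^0_{u_{s-1},u_s}\equiv M^0_{u_0,u_m}\pmod{2}$. This follows by induction on~$m$, using the triangle inequality for finiteness and the defining bipartite-triangle identity for $M^0$ for parity, and of course it includes the closed-sequence version where the right-hand side vanishes modulo~$2$. Given any cycle $C$ in $G^1\oplus G^2$ not contained in a single $G^i$, I cut it at the distinguished vertices into arcs $A_s$ from $u_{s-1}$ to $u_s$ lying in some $G^{c_s}$; bipartiteness of $G^{c_s}$ gives $|A_s|\equiv M^{c_s}_{u_{s-1},u_s}\pmod{2}$, compatibility lets me replace each such term by $M^0_{u_{s-1},u_s}$, and the closed form of the claim yields $|C|\equiv 0\pmod{2}$.

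Finally, for $M^0\lm M$, I pick $v_i,v_j$ with $M_{ij}<\infty$ and decompose a shortest $v_i$-to-$v_j$ path in $G^1\oplus G^2$ at the distinguished vertices it visits. On one hand, $M_{ij}\ge\sum_s M^{c_s}_{u_{s-1},u_s}\ge\sum_s M^0_{u_{s-1},u_s}\ge M^0_{ij}$ by iterated triangle inequality on $M^0$; on the other hand, the same decomposition read modulo~$2$ together with the parity-propagation claim gives $M_{ij}\equiv M^0_{ij}\pmod{2}$, which is the compatibility required. The only real obstacle is the parity-propagation claim for $M^0$; after that the remainder is routine bookkeeping with the triangle inequality and the observation that finiteness of an $M^{c}_{ij}$ automatically transfers to finiteness of $M^0_{ij}$.
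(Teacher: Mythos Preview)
Your argument is correct and follows essentially the same route as the paper's proof: both decompose a path (or cycle) in $G^1\oplus G^2$ at the distinguished vertices into subpaths lying entirely in one of the $G^i$, then push both the length lower bound and the parity through $M^0$ via the triangle inequality and the bipartite-type condition. The only cosmetic differences are that you treat bipartiteness and $M^0\lm M$ as two separate applications of the decomposition (the paper proves $M^0\lm M$ and remarks that bipartiteness comes out of the same computation), and you isolate the ``parity propagation'' for $M^0$ as an explicit inductive claim where the paper simply invokes ``since $M^0$ is a bipartite type''.
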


\begin{proof}
The types $M^1$ and $M^2$ are compatible: if both $M^1_{ij}$ and
$M_{ij}^{2}$ are finite, then $M_{ij}^{0}$ is finite and has the same
parity as $M^1_{ij}$ and $M_{ij}^{2}$. 
Hence, the entries $M^1_{ij}$ and $M^2_{ij}$ have the same parity.

Let $M$ be the type of $G^1\oplus G^2$.
Note that it does not hold in general that $M_{ij}=\min\{M^1_{ij},M^2_{ij}\}$.
We show that $M^0\lm M$ which will also imply that $G^1\oplus G^2$
is bipartite since $M^0$ is a bipartite type.
Consider a shortest path $P$ between two distinguished
vertices $v_i$ and $v_{i'}$ and split $P$ into paths
$P_1,\ldots,P_\ell$ delimited by distinguished vertices on $P$.
Note that $\ell\le k$ since $P$ is a path. Let $j_0=i$ and
let $j_i$ be the index of the end-vertex of $P_i$ for
$i\in\{1,\ldots,\ell\}$.
In particular, $j_\ell=i'$. Each of the paths $P_1,\ldots,P_\ell$
is fully contained in $G^1$ or in $G^2$ (possibly in both if
it is a single edge).
Since $M^0\lm M^1$ and $M^0\lm M^2$, the length of $P_i$
is at least $M^0_{j_{i-1}j_i}$, and it has the same parity as $M^0_{j_{i-1}j_i}$.
Since $M^0$ is a bipartite type (among others, it satisfies the triangle
inequality), the length of $P$, which is $M_{ii'}$, has the same parity as
$M^0_{j_0j_\ell}=M^0_{ii'}$ and is at least $M^0_{ii'}$.
This implies that $M^0\lm M$.
\end{proof}

\section{The Main Lemma}

In this section, we prove a lemma which forms the core of our argument. 
To this end, we first prove another lemma that asserts that for every $k$,
$p$ and $q$, the set of types of all bipartite rooted partial $k$-trees
forbidding a fixed set of $p$-precolorings from extending (and maybe some
other precolorings, too) has always a maximal element.
We formulate the lemma slightly differently to facilitate its application.

\begin{lemma}
\label{lm-mainM}
For every $k$, $p$ and $q$, there exists a finite number
of (bipartite) types $M^1,\ldots,M^m$ such that for any bipartite
rooted partial $k$-tree $G$ with type $M$, there exists a bipartite
rooted partial $k$-tree $G'$ with type $M^i$ for some
$i\in\{1,\ldots,m\}$ such that $\F(G')\subseteq\F(G)$ and $M\lm M^i$.
\end{lemma}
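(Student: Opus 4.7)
The plan is a well-quasi-order argument on the set of achievable pairs. Define
\[
S = \bigl\{(M,F) : \exists\ \text{bipartite rooted partial $k$-tree }G\text{ of type }M\text{ with }\F(G)=F\bigr\}
\]
and order it by $(M,F)\sqsubseteq(M',F')$ iff $M\lm M'$ and $F'\subseteq F$. Since $\F(G)$ is a subset of the finite set $\{0,\ldots,p-1\}^{k+1}$, the possible $F$'s form a finite poset under $\supseteq$. Moreover, bipartite types stratify into finitely many classes indexed by a \emph{signature} (which entries are $\infty$) and a parity pattern on the finite entries; within each class, a type is a tuple in $\mathbb{N}^d$ and $\lm$ reduces to coordinate-wise $\le$, which is a well-quasi-order by Dickson's Lemma. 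Hence the product order $\sqsubseteq$ on $S$ is a well-quasi-order, and in particular its antichain of maximal elements is finite.

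I plan to take $M^1,\ldots,M^m$ to be the types appearing in these maximal pairs $(M^i,F^i)$. Once I show that every $(M,F)\in S$ is dominated by some maximal pair, the lemma follows: the graph $G^i$ witnessing the maximal pair has type $M^i$ with $M\lm M^i$, and $\F(G^i)=F^i\subseteq F=\F(G)$, as required.

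The main obstacle is this domination claim, which is equivalent to showing that every $\sqsubseteq$-chain in $S$ has an upper bound in $S$, so that Zorn's Lemma applies. Along a chain $(M_1,F_1)\sqsubset(M_2,F_2)\sqsubset\cdots$, the $F$-part stabilizes at some $F^*$ in the finite poset, and each entry of the type part either stabilizes at a finite value or diverges to $\infty$. The candidate upper bound $M^*$ has the stabilized finite entries and $\infty$ in the diverging coordinates; it remains to exhibit a bipartite rooted partial $k$-tree $G^*$ with type $M^*$ and $\F(G^*)=F^*$.

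This is where Lemma~\ref{lm-extend} is used. Let $d$ be the constant it provides. I would partition the distinguished vertices into clusters according to which pairs of indices have finite entries in $M^*$, and construct $G^*$ as the rooted $\oplus$-sum over clusters of bounded-diameter subgraphs realizing the appropriate projection of $F^*$ onto each cluster. Lemma~\ref{lm-extend} ensures that along the chain the inter-cluster distances in $G_i$ eventually exceed $d$, so that precolorings of distinct clusters can be extended independently; consequently the $\F$-values factor as products over the clusters, which is the decisive step for matching $\F(G^*)$ with $F^*$. Establishing this factorization rigorously, and in particular checking that cluster-local extensions can always be glued into a global $(p,q)$-coloring of $G_i$, is the point where I anticipate the most technical care is needed.
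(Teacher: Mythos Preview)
Your wqo/Zorn framework is sound---bipartite types form a wqo under $\lm$ via Dickson's Lemma within each signature/parity class, and the product with the finite poset of $\F$-values is again a wqo---but it is a detour. All of the actual content lies in the step you yourself flag: producing the chain upper bound $G^*$. Two specific issues with your sketch there. First, you aim for $\F(G^*)=F^*$ exactly via a factorization of $\F(G_i)$ over clusters; this is stronger than needed (for Zorn any $(M',F')\in S$ with $M_i\lm M'$ and $F'\subseteq F^*$ suffices) and is not what Lemma~\ref{lm-extend} delivers. Second, Lemma~\ref{lm-extend} concerns \emph{single} precolored vertices at pairwise distance at least $d$; your clusters contain several root vertices close together, so having inter-cluster distance exceed $d$ does not let you invoke it. What is actually required is a contraction: for each cluster $C_j$, collapse the part of $G_i$ far from $C_j$ so that the boundary of the $C_j$-neighbourhood becomes a handful of isolated (red) vertices; only then are those boundary vertices at mutual distance at least $d$ in the residual bipartite graph, and Lemma~\ref{lm-extend} applies to them to glue the cluster-local extensions into a global $(p,q)$-colouring.

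The paper carries out exactly this contraction, but directly, with no wqo or Zorn. It sets $D=4d$, takes $M^1,\ldots,M^m$ to be all bipartite types with entries in $\{1,\ldots,D^{(k+1)^2}\}\cup\{\infty\}$, and for an arbitrary $G$ uses pigeonhole to locate a multiplicative gap $[D^{i_0-1},D^{i_0})$ missed by the entries of $M$. That gap simultaneously supplies the cluster decomposition and the separation needed for the contract-and-glue argument establishing $\F(G')\subseteq\F(G)$. Your chain upper bound, once built correctly, would be precisely this $G'$ applied to a late $G_i$; so the wqo layer adds bookkeeping without leverage, and the point where you ``anticipate the most technical care'' is in fact the whole proof.
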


\begin{proof}
Let $d\ge 2$ be the constant from Lemma~\ref{lm-extend} applied for $p$ and $q$.
Let $M^1,\ldots,M^m$ be all bipartite types with entries
from the set $\{1,\ldots,D^{(k+1)^2}\}\cup\{\infty\}$ where $D=4d$.
Thus, $m$ is finite and does not exceed $(D^{(k+1)^2}+1)^{k(k+1)/2}$.

Let $G$ be a bipartite rooted partial $k$-tree with type $M$.
If $M$ is one of the types $M^1,\ldots,M^m$, then there is nothing
to prove (just choose $i$ such that $M=M^i$).
Otherwise, one of its entries is finite and exceeds $D^{(k+1)^2}$.

For $i\in\{1,\ldots,(k+1)^2\}$,
let $J^i$ be the set of all positive
integers between $D^{i-1}$ and $D^i-1$ (inclusively).
Let $i_0$ be the smallest integer such that no entry of $M$
is contained in $J^{i_0}$. Since $M$ has at most $k(k+1)/2$
different entries, such an index $i_0$ exists.
Note that if $i_0=1$, then Lemma~\ref{lm-extend} implies that
$\F(G)$ contains all possible $p$-precolorings, and the sought graph
$G'$ is the bipartite rooted partial $k$-tree composed of $k+1$ isolated vertices,
with the all-$\infty$ type.

Two vertices $v_i$ and $v_j$
at which $G$ is rooted are \emph{close} if $M_{ij}$ is at most $D^{i_0-1}$.
The relation $\approx$ of being close is an equivalence
relation on $v_1,\ldots,v_{k+1}$. Indeed, it is reflexive and
symmetric by the definition, and we show now that it is transitive.
Suppose that $M_{ij}$ and $M_{jk}$ are both at most $D^{i_0-1}$. Then,
the distance between $v_i$ and $v_k$ is at most
$M_{ij}+M_{jk}\le2D^{i_0-1}-2\le D^{i_0}-1$ since $D\ge2$. Consequently,
by the choice of $i_0$, the distance between $v_i$ and $v_k$ is at
most $D^{i_0-1}-1$ and thus $v_i\approx v_k$.

Let $C_1,\ldots,C_{\ell}$ be the equivalence classes of the relation $\approx$.
Note that $C_1,\ldots,C_{\ell}$ is a finer partition than that
given by the equivalence relation of being connected.

Since $G$ is bipartite, we can partition its vertices into
two color classes, say red and blue.
For every $i\in\{1,\ldots,\ell\}$, contract the closed neighborhood
of a vertex $v$ if $v$ is a blue vertex and its distance
from any vertex of $C_i$ is at least $D^{i_0-1}$ and keep doing so
as long as such a vertex exists. Observe that the resulting graph
is uniquely defined. After discarding the components that do not contain
the vertices of $C_i$, we obtain a bipartite partial $k$-tree $G_i$ rooted
at the vertices of $C_i$:
it is bipartite as we have always contracted closed neighborhoods
of vertices of the same color (blue)
to a single (red) vertex, and its tree-width is at most $k$ since
the tree-width is preserved by contractions.
Moreover, the distance between any two
vertices of $C_i$ has not decreased since any path between them through
any of the newly arising vertices has length
at least $2D^{i_0-1}-2\ge D^{i_0-1}$.

Now, let $G'$ be the bipartite rooted partial $k$-tree obtained
by taking the disjoint union of $G_1,\ldots,G_{\ell}$.
The type $M'$ of $G'$ can be obtained from the type of $G$: 
set $M'_{ij}$ to be $M_{ij}$ if the vertices $v_i$ and $v_j$ are close, and
$\infty$ otherwise. Thus, $M'$ is one of the types
$M^1,\ldots,M^m$ and $M\lm M'$.
It remains to show that $\F(G')\subseteq\F(G)$.

Let $c\in\F(G')$ be a $p$-precoloring that extends to $G'$, and recall that $D\ge4$.
For $i\in\{1,\ldots,\ell\}$,
let $A_i$ be the set of all red vertices at distance at most $D^{i_0-1}$ and
all blue vertices at distance at most $D^{i_0-1}-1$ from $C_i$, and
let $R_i$ be the set of all red vertices at distance $D^{i_0-1}-1$ or
$D^{i_0-1}$ from $C_i$.
Set $B_i=A_i\setminus R_i$ ($B_i$ is the ``interior'' of $A_i$ and $R_i$
its ``boundary'').
The extension of $c$ to $G_i$ naturally defines a coloring of all vertices of $A_i$:
$G_i$ is the subgraph of $G$ induced by $A_i$ with some red vertices of
$R_i$ identified (two vertices of $R_i$ are identified if and only if
they are in the same component of the graph $G-B_i$).

Let $H$ be the following auxiliary graph obtained from $G$: remove the vertices
of $B=B_1\cup\cdots\cup B_{\ell}$ and, for $i\in\{1,\ldots,\ell\}$,
identify every pair of vertices of $R_i$
that are in the same component of $G-B$.
Let $R$ be the set of vertices of $H$
corresponding to some vertices of $R_1\cup\cdots\cup R_{\ell}$. Precolor the vertices
of $R$ with the colors given by the colorings of $G_i$ (note that two vertices of $R_i$
in the same component of $G-B_i$ are also in the same component of $G-B$,
so this is well-defined). The graph $H$ is bipartite as only red vertices have been identified.
The distance between any two precolored vertices is at least $d$:
consider two precolored vertices $r$ and $r'$ at distance at most $d-1$.
Let $i$ and $i'$ be such that $r\in R_i$ and $r'\in R_{i'}$. If $i=i'$,
then $r$ and $r'$ are in the same component of $G-B$ and thus $r=r'$.
If $i\not=i'$ then
by the definition of $R_i$ and $R_{i'}$, the vertex $r$ is in $G$ at distance at most $D^{i_0-1}$
from some vertex $v$ of $C_i$ and $r'$ is at distance at most $D^{i_0-1}$ from
some vertex $v'$ of $C_{i'}$. So, the distance between $v$ and $v'$
is at most $2D^{i_0-1}+d<D^{i_0}-1$. Since $M$ has no entry from $J^{i_0}$,
the vertices $v$ and $v'$ must be close and thus $i=i'$, a
contradiction.

Since the distance between any two precolored vertices is at least $d$, the precoloring
extends to $H$ by Lemma~\ref{lm-extend} and in a natural way it defines a coloring of $G$.
We conclude that
every $p$-precoloring that extends to $G'$ also extends to $G$ and thus $\F(G')\subseteq\F(G)$.
\end{proof}

We now prove our main lemma, which basically states that there is only
a finite number of bipartite rooted partial $k$-trees that can appear
in a minimal non-$(p,q)$-colorable graph with tree-width $k$ and
a given odd girth.

\begin{lemma}
\label{lm-mainG}
For every $k$, $p$ and $q$, there exist a finite number $m$ and
bipartite rooted partial $k$-trees $G^1,\ldots,G^m$ with types
$M^1,\ldots,M^m$ such that for any bipartite rooted partial
$k$-tree $G$ with type $M$ there exists $i$ such that
$\F(G^i)\subseteq \F(G)$ and $M\lm M^i$.
\end{lemma}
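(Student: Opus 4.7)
The plan is to deduce this graph-level statement directly from Lemma~\ref{lm-mainM} by exploiting the fact that, for fixed $p$ and $k$, the family of possible sets $\F(G)$ is finite. Indeed, every $p$-precoloring of the $k+1$ distinguished vertices is an element of the finite set $\{0,\ldots,p-1\}^{k+1}$, so $\F(G)$ is a subset of a fixed set of size $p^{k+1}$ and therefore takes at most $2^{p^{k+1}}$ distinct values, regardless of $G$.

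First, I would apply Lemma~\ref{lm-mainM} to obtain a finite list of bipartite types $M^1,\ldots,M^{m'}$ with the property stated there. For each $j\in\{1,\ldots,m'\}$, let $\Phi^j$ denote the (finite) collection of sets of the form $\F(H)$, where $H$ ranges over all bipartite rooted partial $k$-trees of type exactly $M^j$ (if there are none, $\Phi^j$ is empty and the index $j$ is discarded). For every pair $(j,F)$ with $F\in\Phi^j$, I fix one representative bipartite rooted partial $k$-tree $G^{j,F}$ of type $M^j$ satisfying $\F(G^{j,F})=F$. The union of these representatives over all such pairs is finite; I relabel it as $G^1,\ldots,G^m$ with corresponding types $M^1,\ldots,M^m$ (allowing distinct indices to share the same type).

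To verify the conclusion, let $G$ be an arbitrary bipartite rooted partial $k$-tree with type $M$. Lemma~\ref{lm-mainM} supplies a bipartite rooted partial $k$-tree $G''$ of some type $M^j$ (in the original indexing) satisfying $M\lm M^j$ and $\F(G'')\subseteq\F(G)$. Setting $F=\F(G'')$, the representative $G^{j,F}$ belongs to our final list, has type $M^j$, and satisfies $\F(G^{j,F})=F\subseteq\F(G)$, which is precisely what is required. The entire argument is routine bookkeeping built on top of Lemma~\ref{lm-mainM}; the only substantive input beyond that lemma is the observation that $\F$-values live in a finite lattice, so no genuine obstacle arises and neither the relation $\lm$ nor the gluing Lemma~\ref{lm-type-glue} is needed here.
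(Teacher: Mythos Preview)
Your argument is correct, and it takes a genuinely different route from the paper's. The paper produces \emph{one} graph $G^i$ per type $M^i$: for each precoloring $c$ that fails to extend to some bipartite rooted partial $k$-tree of type $M^i$, it fixes a witness $G^i_c$ and sets $G^i=\bigoplus_c G^i_c$; Lemma~\ref{lm-type-glue} is then invoked to check that the glued graph still has type $M^i$. In effect the paper builds, for each type, a single graph whose $\F$-set is minimal among all graphs of that type. You instead list \emph{all} realized $\F$-values at each type and keep one representative for each, exploiting only the finiteness of the power set of $\{0,\ldots,p-1\}^{k+1}$.

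What each approach buys: your version is shorter and avoids the gluing construction and Lemma~\ref{lm-type-glue} entirely. The paper's version yields a shorter list (at most $m'$ graphs rather than up to $m'\cdot 2^{p^{k+1}}$) and, more importantly, makes the size of each $G^i$ easier to control: each $G^i$ is an explicit $\oplus$-combination of at most $p^{k+1}$ pieces, which feeds directly into the quantitative bound on $g$ discussed after the proof of Theorem~\ref{thm-main}. Your representatives $G^{j,F}$ could also be chosen minimal to recover such bounds, but the lemma as stated does not require this, so your proof is complete as written.
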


\begin{proof}
Let $M^1,\ldots,M^{m}$ be the types from Lemma~\ref{lm-mainM}.
We define the graph $G^i$ as follows:
for every $p$-precoloring $c$ that does not extend to a bipartite partial
rooted $k$-tree with type $M^i$, fix any partial rooted $k$-tree $G^i_c$
with type $M^i$ such that $c$ does not extend to $G^i_c$.
Set $G^i=\bigoplus_{c} G^i_c$, where $c$ runs over all such $p$-precolorings.
If the above sum of partial $k$-trees is non-empty, then the type $M$
of $G^i$ is $M^i$. Indeed,
$M\lm M^i$ by the definition of $G^i$, and Lemma~\ref{lm-type-glue}
implies that $M^i\lm M$.
If all the $p$-precolorings of the $k+1$ vertices in the root extend to
each partial $k$-tree of type $M^i$, then let
$G^i$ be the graph consisting of $k+1$ isolated vertices. This happens
in particular for the all-$\infty$ type.

Let us verify the statement of the lemma.
Let $G$ be a bipartite rooted partial $k$-tree and
let $M$ be the type of $G$. If $\F(G)$ is composed of all $p$-precolorings,
the sought graph $G^i$
is the one composed of $k+1$ isolated vertices. Hence, we assume that $\F(G)$
does not contain all $p$-precolorings, i.e., there are $p$-precolorings that do not extend to $G$.
By Lemma~\ref{lm-mainM}, there exists a bipartite rooted partial $k$-tree
$G'$ with type $M'$ such that $M\lm M'=M^i$ for some $i$ and $\F(G')\subseteq\F(G)$.
For every $p$-precoloring $c$ that does not extend to $G'$
(and there exists at least one such $p$-precoloring $c$),
some graph $G^i_c$ has been glued into $G^i$.
Hence, $\F(G^i)\subseteq\F(G')\subseteq\F(G)$. Since the type of $G^i$ is $M^i$,
the conclusion of the lemma follows.
\end{proof}

\section{Proof of Theorem~\ref{thm-main}}

We are now ready to prove Theorem~\ref{thm-main}, which is recalled below.

\begin{thm2}
For every $k$ and every $\varepsilon>0$, there exists $g$ such that
every graph with tree-width at most $k$ and odd-girth at least $g$
has circular chromatic number at most $2+\varepsilon$.
\end{thm2}

\begin{proof}
Fix $p$ and $q$ such that $2<p/q\le 2+\varepsilon$. Let $G^1,\ldots,G^m$
be the bipartite partial $k$-trees from Lemma~\ref{lm-mainG} applied
for $k$, $p$ and $q$.
Set $N$ to be the largest order of the graphs $G^i$ and set $g$ to be $3N$.
We assert that each partial $k$-tree with odd-girth $g$ has circular chromatic
number at most $p/q$. Assume that this is not the case and
let $G$ be a counterexample with the fewest vertices.

The graph $G$ has at least $3N$ vertices (otherwise, it has no odd cycles and
thus it is bipartite). By Lemma~\ref{lm-small}, $G$ is isomorphic to
$G_1\oplus G_2$, where $G_1$ and $G_2$ are rooted partial $k$-trees and the number of vertices of $G_1$
is between $N+1$ and $2N$. By the choice of $g$, the graph
$G_1$ has no odd cycle and thus
it is a bipartite rooted partial $k$-tree. By Lemma~\ref{lm-mainG}, there exists $i$
such that $\F(G^i)\subseteq\F(G_1)$ and $M_1\lm M^i$
where $M_1$ is the type of $G_1$ and $M^i$ is the type of $G^i$.
Let $G'$ be the partial $k$-tree $G^i\oplus G_2$.

First, $G'$ has fewer vertices than $G$ since the number of vertices of $G^i$
is at most $N$ and the number of vertices of $G_1$ is at least $N+1$.
Second, $G'$ has no $(p,q)$-coloring: if it had a $(p,q)$-coloring, then the
corresponding
$p$-precoloring of the $k+1$ vertices shared by $G^i$ and $G_2$ would extend
to $G_1$ since $\F(G^i)\subseteq\F(G_1)$
and thus $G$ would have a $(p,q)$-coloring, too.
Finally, $G'$ has no odd cycle of length at most $g$: if it had such a cycle,
replace any path between vertices $v_j$ and $v_{j'}$ of the root of $G^i$ with a path
of at most the same length between them in $G_1$ (recall that $M_1\lm M^i$). If such paths
for different pairs of $v_j$ and $v_{j'}$ on the considered odd cycle intersect,
take their symmetric difference. In this way, we obtain an Eulerian subgraph of
$G=G_1\oplus G_2$ with an odd number of edges such that the number of its edges
does not exceed $g$. Consequently, this Eulerian subgraph has an odd cycle of
length at most $g$, which violates the assumption on the odd-girth of $G$.
We conclude that $G'$ is a counterexample with less vertices than
$G$, a contradiction.
\end{proof}

We end by pointing out that the approach used yields an upper bound
of
$3(k+1)\cdot2^{2^{p^{k+1}}((4d)^{(k+1)^2}+1)^{k^2}}$
for the smallest $g$ such that
all graphs with tree-width at most $k$ and odd-girth at least $g$
have circular chromatic number at most $p/q$, whenever $p/q>2$.
More precisely, the value of $N$ cannot exceed
$(k+1)\cdot2^{2^{p^{k+1}}((4d)^{(k+1)^2}+1)^{k^2}}$. To see this,
we consider all pairs $P=(C,M)$ where $C$ is a set of $p$-precolorings
of the root and $M$ is a type such that there is a bipartite rooted
partial $k$-tree of type $M$ to which no coloring of $C$ extends.
Let $n_P$ be the size of a smallest such partial $k$-tree.
We obtain a sequence of at most 
$2^{p^{k+1}}\times\left((4d)^{(k+1)^{2}}+1\right)^{k^2}$
integers. The announced bound follows from the following fact:
if the sequence is sorted in increasing order, then each term is at most twice
the previous one.

Indeed, consider the tree-decomposition of the partial $k$-tree $G_P$
chosen for the pair $P$. If the bag containing the root has
a single child, then we delete a vertex of the root, and set a vertex
in the single child to be part of the root. We obtain a partial $k$-tree
to which some $p$-precolorings of $C$ do not extend. Thus,
$n_P\le1+n_{P'}$ for some pair $P'$ and $n_{P'}<n_P$.
If the bag containing the root has more than one child, then
$G_P$ can be obtained by identifying the roots of two smaller partial
$k$-trees $G$ and $G'$.
By the minimality of $G_P$, the orders of $G$ and $G'$ are $n_{P_1}$ and
$n_{P_2}$ for two pairs $P_1$ and $P_2$ such that $n_{P_i}<n_P$ for $i\in\{1,2\}$.
This yields the stated fact, which in turn implies
the given bound, since the smallest element of the sequence is $k+1$.

\bigskip
\noindent
\textbf{Acknowledgment.} This work was done while the first three
authors were visiting the fourth at Technische Universit\"at Ilmenau.
They thank their host for providing a perfect working environment.


\begin{thebibliography}{9}
\bibitem{AlWe06}
M. O. Albertson, D. B. West:
Extending precolorings to circular colorings,
J. Combin. Theory Ser. B  \textbf{96} (2006), 472--481.
\bibitem{Erd59}
P. Erd{\H{o}}s:
Graph theory and probability,
Canad. J. Math. \textbf{11} (1959), 34--38.
\bibitem{bib-galluccio}
A. Galluccio, L. A. Goddyn, P. Hell:
High-girth graphs avoiding a minor are nearly bipartite,
J. Combin. Theory Ser. B \textbf{83} (2001), 1--14.
\bibitem{KlZh00}
W. Klostermeyer, C. Q. Zhang:
$(2+\epsilon)$-coloring of planar graphs with large odd-girth,
J. Graph Theory  \textbf{33}  (2000), 109--119.
\bibitem{bib-nesetril}
J. Ne{\v s}et{\v r}il, X. Zhu:
On bounded tree-width duality of graphs,
J. Graph Theory \textbf{23} (1996), 151--162.
\bibitem{bib-pan}
Z. Pan, X. Zhu:
The circular chromatic number of series-parallel graphs of large odd girth,
Discrete Math. \textbf{245} (2002), 235--246.
\bibitem{RoSe86}
N. Robertson, P. D. Seymour:
Graph minors. II. Algorithmic aspects of tree-width,
J. Algorithms  \textbf{7}  (1986), 309--322.
\bibitem{Tho88}
C. Thomassen:
Paths, circuits and subdivisions,
in ``Selected topics in graph theory'', Vol. 3, pp. 97--131,
Academic Press, New York, 1988.
\bibitem{You96}
D. A. Youngs:
$4$-chromatic projective graphs,
J. Graph Theory \textbf{21} (1996), 219--227.
\bibitem{bib-zhu01}
X. Zhu:
Circular chromatic number: a survey,
Discrete Math. \textbf{229} (2001), 371--410.
\bibitem{bib-zhu06}
X. Zhu:
Recent developments in circular colorings of graphs,
in M. Klazar, J. Kratochv{\'\i}l, J. Matou{\v s}ek, R. Thomas, P. Valtr (eds.):
Topics in Discrete Mathematics, Springer, 2006, 497--550.
\end{thebibliography}
\end{document}